
\documentclass[12pt]{amsart}

\usepackage{amsmath}
\usepackage{amssymb}
\usepackage{amsfonts}
\usepackage{amsthm}
\usepackage{enumerate}
\usepackage{hyperref}
\usepackage{color}

\textheight=600pt
\textwidth=435pt
\oddsidemargin=17pt
\evensidemargin=17pt

\theoremstyle{plain}
\newtheorem{thm}{Theorem}[section]

\newtheorem{prop}[thm]{Proposition}
\newtheorem{lem}[thm]{Lemma}
\newtheorem{cor}[thm]{Corollary}

\theoremstyle{definition}
\newtheorem{dfn}[thm]{Definition}

\newtheorem{exmp}[thm]{Example}

\newtheorem{rem}[thm]{Remark}

\newtheorem{dfns-rems}[thm]{Definitions and Remarks}
\newtheorem{notas-rems}[thm]{Notations and Remarks}
\newtheorem{exmps-rems}[thm]{Examples and Remarks}


\begin{document}


\title[On the Stanley depth  and size of monomial ideals]{On the Stanley depth and size of monomial ideals}


\author[S. A. Seyed Fakhari]{S. A. Seyed Fakhari}

\address{S. A. Seyed Fakhari, School of Mathematics, Institute for Research
in Fundamental Sciences (IPM), P.O. Box 19395-5746, Tehran, Iran.}

\email{fakhari@ipm.ir}

\urladdr{http://math.ipm.ac.ir/fakhari/}


\begin{abstract}
Let $\mathbb{K}$ be a field and $S=\mathbb{K}[x_1,\dots,x_n]$ be the
polynomial ring in $n$ variables over the field $\mathbb{K}$. For every monomial ideal $I\subset S$,  We provide a recursive formula to determine a lower bound for the Stanley depth of $S/I$. We use this formula to prove the inequality ${\rm sdepth}(S/I)\geq {\rm size}(I)$ for a particular class of monomial ideals.
\end{abstract}


\subjclass[2000]{Primary: 13C15, 05E40; Secondary: 13C13}


\keywords{Stanley depth, Size}




\maketitle


\section{Introduction} \label{sec1}

Let $\mathbb{K}$ be a field and $S=\mathbb{K}[x_1,\dots,x_n]$ be the
polynomial ring in $n$ variables over the field $\mathbb{K}$. Let $M$ be a nonzero
finitely generated $\mathbb{Z}^n$-graded $S$-module. Let $u\in M$ be a
homogeneous element and $Z\subseteq \{x_1,\dots,x_n\}$. The $\mathbb
{K}$-subspace $u\mathbb{K}[Z]$ generated by all elements $uv$ with $v\in
\mathbb{K}[Z]$ is called a {\it Stanley space} of dimension $|Z|$, if it is
a free $\mathbb{K}[\mathbb{Z}]$-module. Here, as usual, $|Z|$ denotes the
number of elements of $Z$. A decomposition $\mathcal{D}$ of $M$ as a finite
direct sum of Stanley spaces is called a {\it Stanley decomposition} of
$M$. The minimum dimension of a Stanley space in $\mathcal{D}$ is called the
{\it Stanley depth} of $\mathcal{D}$ and is denoted by ${\rm
sdepth}(\mathcal {D})$. The quantity $${\rm sdepth}(M):=\max\big\{{\rm
sdepth}(\mathcal{D})\mid \mathcal{D}\ {\rm is\ a\ Stanley\ decomposition\
of}\ M\big\}$$ is called the {\it Stanley depth} of $M$. Stanley \cite{s}
conjectured that $${\rm depth}(M) \leq {\rm sdepth}(M)$$ for all
$\mathbb{Z}^n$-graded $S$-modules $M$. For a reader friendly introduction
to Stanley decomposition, we refer to \cite{psty} and for a nice survey on this topic we refer to \cite{h}.

Let $I$ be a monomial ideal of $S$. In \cite{l}, Lyubeznik associated a numerical invariant to $I$ which is called size and is defined as follows.

\begin{dfn}
Assume that $I$ is a monomial ideal of $S$. Let
$I=\bigcap_{j=1}^s Q_j$ be an irredundant primary decomposition of $I$,
where $Q_j$ ($1\leq j\leq s$) is a monomial ideal of $S$. Let $h$ be the
height of $\sum_{j=1}^s Q_j$, and denote by $v$ the minimum number $t$ such
that there exist $1\leq j_1,\ldots,j_t\leq s$ with $$\sqrt{\sum_{i=1}^t
Q_{j_i}}=\sqrt{\sum_{j=1}^s Q_j}.$$ Then the {\it size} of $I$ is defined
to be $v+n-h-1$.
\end{dfn}

Lyubeznik \cite{l} proved that for every monomial ideal $I$, the inequality ${\rm depth}(I)\geq {\rm size}(I)+1$ holds true. Assuming Stanley's conjecture would be true, one obtains the inequalities ${\rm sdepth}(I)\geq {\rm size}(I)+1$ and ${\rm sdepth}(S/I)\geq {\rm size}(I)$. The first inequality was proved by Herzog, Popescu and Vladoiu for squarefree monomial ideals in \cite{hpv}. Recently, Tang \cite{t} proved the second inequality for squarefree monomial ideals. The aim of this paper is to extend Tang's method to prove the inequality ${\rm sdepth}(S/I)\geq {\rm size}(I)$ for a particular class of monomial ideals containing squarefree monomial ideals.

By \cite[Corollary 1.3.2]{hh'}, a monomial ideal is irreducible if and only if it is generated by pure powers of the variables. Also, by \cite[Theorem 1.3.1]{hh'}, every monomial ideal of $S$ can be written as the intersection of irreducible monomial ideals and every irredundant presentation in this form is unique. Assume that $I=Q_1\cap \ldots \cap Q_s$ is the irredundant presentation of $I$ as the intersection of irreducible monomial ideals. Using this presentation, we provide a recursive formula for computing a lower bound for the Stanley depth of $S/I$ (see Theorem \ref{main}). Assume moreover that for every $1\leq i\leq s$ and every proper nonempty subset $\tau\subset [s]$ with $$\sqrt{Q_i}\subseteq\sum_{j\in \tau}\sqrt{Q_j}$$we have
$$Q_i\subseteq\sum_{j\in\tau}Q_j.$$Then we prove that ${\rm sdepth}(S/I)\geq {\rm size}(I)$ (see Theorem \ref{size}).

Before beginning the proof, we mention that although, the behavior of Stanley depth with polarization is known \cite{ikm}, the following example shows that one can not use the polarization and Tang's result to deduce Theorem \ref{size}.

\begin{exmp}
Let $I=(x_1^2, x_2x_3)$ be a monomial ideal of $S=\mathbb{K}[x_1, x_2, x_3]$. Then $I$ satisfies the the assumptions of Theorem \ref{size} and one can easily check that ${\rm size}(I)=1$. Thus, Theorem \ref{size} implies that ${\rm sdepth}(S/I)\geq 1$. On the other hand, by applying polarization on $I$, we obtain the ideal $I^p=(x_1x_4, x_2x_3)$ as a monomial ideal in the polynomial ring $T=\mathbb{K}[x_1, x_2, x_3, x_4]$. One can check that ${\rm size}(I^p)=1$. Now \cite[Corollary 4.4]{ikm} and \cite[Theorem 3.2]{t} imply that ${\rm sdepth}(S/I)={\rm sdepth}(T/I^p)-1\geq 1-1=0$. Note that this inequality is weaker than one obtained by Theorem \ref{size}.
\end{exmp}


\section{Stanley depth and size} \label{sec2}

In this section, we prove the main results of this paper. Using the irredundant primary decomposition of a monomial ideal $I$, we first provide a decomposition for $S/I$ in Corollary \ref{sidecomp}. Then we use this decomposition to obtain a lower bound for the Stanley depth of $S/I$ (see Theorem \ref{main}). This lower bound and an inductive argument help us to prove the inequality ${\rm sdepth}(S/I)\geq {\rm size}(I)$ for a particular class of monomial ideals (see Theorem \ref{size}).

\begin{rem}
We emphasize that every decomposition in this paper is valid only in the category of $\mathbb{K}$-vector spaces and not in the category of $S$-modules.
\end{rem}

To obtain a decomposition for $S/I$, we first need to have decompositions for $S$ and $I$. The following proposition, provides the required decomposition for $S$. Before beginning the proof, we remind that for every subset $S'$ of $S$, the set of monomials belonging to $S'$ is denoted by ${\rm Mon}(S')$. Also, for every monomial $u\in S$, the {\it support} of $u$, denoted by ${\rm Supp}(u)$ is the set of variables which divide $u$.
\begin{prop} \label{sdecomp}
Let $S'=\mathbb{K}[x_1,\dots,x_r]$, $S''=\mathbb{K}[x_{r+1},\dots,x_n]$, $S=\mathbb{K}[x_1,\dots,x_n]$, and $I$ be a monomial ideal of $S$. Assume that
\[
\begin{array}{rl}
I=Q_1\cap \ldots \cap Q_s, \ s\geq 2
\end{array} \tag{$\dagger$} \label{dag}
\]
is the unique irredundant presentation of $I$ as the intersection of irreducible monomial ideals. Suppose that $Q=\sum_{i=1}^sQ_i$. For every proper subset $\tau\subset [s]$, set $$S_{\tau}=\mathbb{K}\Bigg[x_i \ \bigg| \ 1\leq i \leq r, \ x_i\notin \sum_{j\in \tau}\sqrt{Q_j}\Bigg]$$and$$\mathcal{M}_{\tau}=\Bigg\{u \ \bigg| \ u\in {\rm Mon}(S')\setminus \sum_{j\in \tau}Q_j \Bigg\}\bigcap \mathbb{K}\Bigg[x_i \ \bigg| \ x_i\in \sum_{j\in \tau}\sqrt{Q_j}\Bigg].$$Then$$(\ast) \ \ \ \ \ \ \ \ \ \ \ \ \ S=\Big(\bigoplus_{u\in {\rm Mon}(S'\setminus Q)}uS''\Big)\oplus\Bigg(\bigoplus_{\tau\subset [s]}\bigoplus_{w\in \mathcal{M}_{\tau}}\Bigg(\Big(\bigcap_{j\in [s]\setminus \tau}Q_j\cap wS_{\tau}\Big)S_{\tau}[x_{r+1}, \ldots, x_n]\Bigg)\Bigg).$$
\end{prop}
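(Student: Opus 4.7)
The plan is to prove the decomposition $(\ast)$ monomial by monomial. First I would note that each summand on the right is a monomial $\mathbb K$-subspace of $S$ (intersections and products of monomial subspaces are again monomial subspaces), so it suffices to show that every monomial $m\in S$ belongs to exactly one summand. Throughout, I use the unique factorization $m=uv$ with $u\in {\rm Mon}(S')$ and $v\in{\rm Mon}(S'')$.

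If $u\notin Q$, then $m$ clearly lies in $uS''$, and $m$ cannot lie in any of the other summands because membership there forces the $S'$-part of $m$ into $\bigcap_{j\in [s]\setminus\tau}Q_j$ with $[s]\setminus\tau$ nonempty, hence into $Q$.

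If instead $u\in Q$, I would set
\[
\tau\ :=\ \{j\in [s]\,:\, u\notin Q_j\},
\]
which is a proper subset of $[s]$ because $u$ must lie in at least one $Q_j$. Factor $u=w\alpha$ with $w$ the product of the pure powers appearing in $u$ of the variables $x_i$ satisfying $x_i\in\sum_{j\in\tau}\sqrt{Q_j}$, and $\alpha$ the complementary factor; by construction $\alpha\in S_\tau$. I then need to check that $w\in\mathcal M_\tau$ and $u\in\big(\bigcap_{j\in [s]\setminus\tau}Q_j\big)\cap wS_\tau$. For $j\in\tau$, the generators of $Q_j$ are pure powers of variables in $\sqrt{Q_j}\subseteq\sum_{j'\in\tau}\sqrt{Q_{j'}}$, so $u$ and $w$ have identical exponents on these variables, giving $w\in Q_j\iff u\in Q_j$, which fails by the choice of $\tau$; hence $w\notin\sum_{j\in\tau}Q_j$ and therefore $w\in\mathcal M_\tau$. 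For $j\notin\tau$, $u\in Q_j$ by definition of $\tau$, so $u\in\bigcap_{j\notin\tau}Q_j$. Consequently $m=(w\alpha)\cdot v$ lies in the $(\tau,w)$-summand since $v\in S''\subseteq S_\tau[x_{r+1},\ldots,x_n]$.

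For uniqueness I would take any second representation $m=w'\beta v'$ arising from a $(\tau',w')$-summand, with $\beta\in {\rm Mon}(S_{\tau'})$, $w'\beta\in\bigcap_{j\notin\tau'}Q_j$, and $v'\in{\rm Mon}(S'')$. Matching $S'$- and $S''$-parts forces $v'=v$ and $u=w'\beta$; the inclusion $u\in\bigcap_{j\notin\tau'}Q_j$ gives $\tau'\supseteq\tau$, and for any hypothetical $j\in\tau'\setminus\tau$ the same support argument as above (applied now to $w'$, whose support contains all $\sqrt{Q_j}$-variables since $\sqrt{Q_j}\subseteq\sum_{j'\in\tau'}\sqrt{Q_{j'}}$) yields $w'\in Q_j$, contradicting $w'\in\mathcal M_{\tau'}$. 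Hence $\tau'=\tau$, and the unique splitting of $u$ into a factor supported on $\sum_{j\in\tau}\sqrt{Q_j}$-variables and a factor on the complementary variables forces $w'=w$ and $\beta=\alpha$.

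I expect the main obstacle to be bookkeeping rather than ideas: one has to juggle several variable sets (those in $S'$, in $S_\tau$, in $\sum_{j\in\tau}\sqrt{Q_j}$, and in $S''$) and repeatedly invoke the structural fact that membership of a monomial in an irreducible ideal $Q_j$ depends only on the exponents of the variables in $\sqrt{Q_j}$. Once the assignment $m\mapsto(\tau,w)$ has been correctly identified, existence and uniqueness both reduce to this one principle.
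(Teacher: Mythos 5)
Your proof is correct and takes essentially the same route as the paper: the same assignment $\tau=\{j\in[s] : u\notin Q_j\}$ and factorization $u=w\alpha$ for the covering step, and the same reduction of directness to monomial-by-monomial disjointness. The only difference is organizational: you prove directness by showing each monomial uniquely determines $(\tau,w)$, using that membership in an irreducible $Q_j$ depends only on the exponents of the variables in $\sqrt{Q_j}$, while the paper checks four pairwise intersections are zero via projection maps $x_i\mapsto 1$ and invokes the primary property of $Q_{j_0}$ when $\tau_1\neq\tau_2$ --- the same idea in different packaging.
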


\begin{proof}
We first prove that every monomial of $S$ belongs to the right hand side of ($\ast$). Let $\alpha \in S$ be a monomial. Then there exist monomials $u\in S'$ and $v\in S''$ such that $\alpha=uv$. If $u\notin Q$, then since $\alpha \in uS''$, it belongs to the first summand. Thus, assume that $u\in Q$.

Let $\tau=\{i\in [s] \mid u\notin Q_i\}$. Since $u\in Q$, it follows that $\tau$ is a proper subset of $[s]$. Now there exist monomials $$w\in \mathbb{K}\Bigg[x_i \ \bigg| \ 1\leq i \leq r, \ x_i\in \sum_{j\in \tau}\sqrt{Q_j}\Bigg] \ \ \ \ {\rm and} \ \ \ \ w'\in S_{\tau}$$such that $u=ww'$. Since for every $j\in\tau$, we have $u\notin Q_j$, it follows that $w\notin Q_j$, for every $j\in \tau$. This shows that $w\in \mathcal{M}_{\tau}$. On the other hand, $u\in \bigcap_{j\in [s]\setminus \tau}Q_j$ and hence $u\in \bigcap_{j\in [s]\setminus \tau}Q_j\cap wS_{\tau}$. Therefore$$\alpha=uv\in \Big(\bigcap_{j\in [s]\setminus \tau}Q_j\cap wS_{\tau}\Big)S_{\tau}[x_{r+1}, \ldots, x_n].$$ It turns out that$$S=\sum_{u\in {\rm Mon}(S'\setminus Q)}uS''+\sum_{\tau\subset [s]}\sum_{w\in \mathcal{M}_{\tau}}\Bigg(\Big(\bigcap_{j\in [s]\setminus \tau}Q_j\cap wS_{\tau}\Big)S_{\tau}[x_{r+1}, \ldots, x_n]\Bigg).$$

We now show that the sum is direct. We consider the following cases.\\

{\bf Case 1.} For every pair of monomials $u_1, u_2 \in S'\setminus Q$, we have $u_1S''\cap u_2S''=0$, since$$S''\cap{\rm Supp}(u_1)=S''\cap{\rm Supp}(u_2)=\emptyset.$$\\

{\bf Case 2.} We prove that for every subset $\tau$ of $[s]$ and every pair of monomials $u \in S'\setminus Q$ and $w\in \mathcal{M}_{\tau}$, we have$$uS''\cap \Bigg(\Big(\bigcap_{j\in [s]\setminus \tau}Q_j\cap wS_{\tau}\Big)S_{\tau}[x_{r+1}, \ldots, x_n]\Bigg)=0.$$

Indeed, assume by the contrary that there exists a monomial $$v\in uS''\cap \Bigg(\Big(\bigcap_{j\in [s]\setminus \tau}Q_j\cap wS_{\tau}\Big)S_{\tau}[x_{r+1}, \ldots, x_n]\Bigg).$$Let $v'$ be the monomial obtained from $v$ by applying the map $x_i\mapsto 1$, for every $r+1\leq i\leq n$. Then $v'=u$ and on the other hand,$$v'\in\bigcap_{j\in [s]\setminus \tau}Q_j\cap wS_{\tau}.$$Therefore, $u\in \bigcap_{j\in [s]\setminus \tau}Q_j$, which is a contradiction by $u\notin Q$.\\

{\bf Case 3.} We prove that for every subset $\tau$ of $[s]$ and every pair of distinct monomials $w_1, w_2\in \mathcal{M}_{\tau}$,$$\Bigg(\Big(\bigcap_{j\in [s]\setminus \tau}Q_j\cap w_1S_{\tau}\Big)S_{\tau}[x_{r+1}, \ldots, x_n]\Bigg)\cap \Bigg(\Big(\bigcap_{j\in [s]\setminus \tau}Q_j\cap w_2S_{\tau}\Big)S_{\tau}[x_{r+1}, \ldots, x_n]\Bigg)=0.$$

Indeed, assume by the contrary that there exists a monomial $$v\in \Bigg(\Big(\bigcap_{j\in [s]\setminus \tau}Q_j\cap w_1S_{\tau}\Big)S_{\tau}[x_{r+1}, \ldots, x_n]\Bigg)\cap \Bigg(\Big(\bigcap_{j\in [s]\setminus \tau}Q_j\cap w_2S_{\tau}\Big)S_{\tau}[x_{r+1}, \ldots, x_n]\Bigg).$$Let $v'$ be the monomial obtained from $v$ by applying the map $x_i\mapsto 1$, for every $i$ with $x_i\in S_{\tau}[x_{r+1}, \ldots,x_n]$. Since $v\in w_1S_{\tau}[x_{r+1}, \ldots, x_n]$ and$$w_1\in \mathbb{K}\Bigg[x_i \ \bigg| \ 1\leq i \leq r, \ x_i\in \sum_{j\in \tau}\sqrt{Q_j}\Bigg],$$we conclude that $v'=w_1$. Similarly $v'=w_2$, which implies that $w_1=w_2$ and this is a contradiction.\\

{\bf Case 4.} We prove that for every pair of proper subsets $\tau_1, \tau_2$ of $[s]$ with $\tau_1\neq \tau_2$ and every pair of monomials $w_1 \in \mathcal{M}_{\tau_1}$ and $w_2 \in \mathcal{M}_{\tau_2}$,$$\Bigg(\Big(\bigcap_{j\in [s]\setminus \tau_1}Q_j\cap w_1S_{\tau_1}\Big)S_{\tau_1}[x_{r+1}, \ldots, x_n]\Bigg)\cap \Bigg(\Big(\bigcap_{j\in [s]\setminus \tau_2}Q_j\cap w_2S_{\tau_2}\Big)S_{\tau_2}[x_{r+1}, \ldots, x_n]\Bigg)=0.$$

Indeed, assume by the contrary that there exists a monomial $$v\in \Bigg(\Big(\bigcap_{j\in [s]\setminus \tau_1}Q_j\cap w_1S_{\tau_1}\Big)S_{\tau_1}[x_{r+1}, \ldots, x_n]\Bigg)\cap \Bigg(\Big(\bigcap_{j\in [s]\setminus \tau_2}Q_j\cap w_2S_{\tau_2}\Big)S_{\tau_2}[x_{r+1}, \ldots, x_n]\Bigg).$$Since $\tau_1\neq \tau_2$, without lose of generality we may assume that $\tau_1\nsubseteq \tau_2$. Thus, there exists an integer $j_0\in \tau_1\setminus \tau_2$. Let $v'$ be the monomial obtained from $v$ by applying the map $x_i\mapsto 1$, for every $r+1\leq i\leq n$. Then$$v'\in \Big(\bigcap_{j\in [s]\setminus \tau_1}Q_j\cap w_1S_{\tau_1}\Big)\cap \Big(\bigcap_{j\in [s]\setminus \tau_2}Q_j\cap w_2S_{\tau_2}\Big),$$in particular $v'\in Q_{j_0}$. On the other hand, by $v'\in w_1S_{\tau_1}$, we conclude that there exists a monomial $w_0\in S_{\tau_1}$, such that $v'=w_0w_1$. Since $w_1\in \mathcal{M}_{\tau_1}$, we see that $w_1\notin Q_{j_0}$. Also, by the definition of  $S_{\tau_1}$, we conclude that $w_0\notin \sqrt{Q_{j_0}}$. Since $Q_{j_0}$ is a primary ideal, $v'=w_0w_1\notin Q_{j_0}$, which is a contradiction. This completes the proof of the proposition.
\end{proof}

\begin{rem} \label{empty}
Notice that in the decomposition of Proposition \ref{sdecomp}, the summand corresponding to $\tau=\emptyset$ is equal to $(I\cap S')S$. Because $\mathcal{M}_{\emptyset}=\{1\}$ and $S_{\emptyset}=S'$.
\end{rem}

In the following proposition, we provide a decomposition for $I$.

\begin{prop} \label{idecomp}
Under the assumptions as in Proposition \ref{sdecomp}, suppose further that one of the irreducible monomial ideals in the decomposition \ref{dag} of $I$ is $(x_1^{a_1}, \ldots, x_r^{a_r})$, where $a_1, \ldots, a_r$ are positive integers. Then there is a decomposition of $I$:

\begin{align*}
I & =\Big((I\cap S')S\Big)\oplus\\ & \bigoplus_{\tau\subset [s]}\bigoplus_{w\in \mathcal{M}_{\tau}}\Bigg(\Bigg(\Big(\bigcap_{j\in [s]\setminus \tau}Q_j\cap wS_{\tau}\Big)S_{\tau}[x_{r+1}, \ldots, x_n]\Bigg) \cap \Bigg(\Big(\bigcap_{j\in \tau}Q_j\cap wS''\Big)S_{\tau}[x_{r+1}, \ldots, x_n]\Bigg)\Bigg),
\end{align*}
where $\tau$ runs over all nonempty proper subsets of $[s]$.
\end{prop}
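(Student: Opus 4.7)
The plan is to intersect the decomposition of $S$ from Proposition~\ref{sdecomp} with $I$. Since that decomposition is a direct sum of $\mathbb{K}$-vector subspaces, intersecting with $I$ yields $I=\bigoplus_{P}(P\cap I)$, where $P$ ranges over the summands, and I will analyze each summand separately.

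By Remark~\ref{empty}, the $\tau=\emptyset$ summand equals $(I\cap S')S$, which lies entirely in $I$ and contributes the first piece of the proposed decomposition. For each $u\in{\rm Mon}(S'\setminus Q)$, I will show that $uS''\cap I=0$. Here the hypothesis that $(x_1^{a_1},\ldots,x_r^{a_r})$ appears as one of the $Q_j$'s is essential: since $u\notin Q$, no $x_i^{a_i}$ divides $u$, and any $uv$ with $v\in S''$ cannot lie in $(x_1^{a_1},\ldots,x_r^{a_r})$ because $v$ involves no variable from $\{x_1,\ldots,x_r\}$. Hence $uv\notin I$, so these summands drop out.

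The core of the argument is to show that for each nonempty proper $\tau\subset[s]$ and each $w\in\mathcal{M}_\tau$, setting $P_{\tau,w}:=\big(\bigcap_{j\in[s]\setminus\tau}Q_j\cap wS_\tau\big)S_\tau[x_{r+1},\ldots,x_n]$, one has
$$P_{\tau,w}\cap I\;=\;P_{\tau,w}\cap\Big(\bigcap_{j\in\tau}Q_j\cap wS''\Big)S_{\tau}[x_{r+1},\ldots,x_n].$$
The inclusion $\supseteq$ is immediate, since the right-hand side is contained in $\bigcap_{j\in\tau}Q_j$ (as each $Q_j$ is an ideal) and in $\bigcap_{j\in[s]\setminus\tau}Q_j$ (via $P_{\tau,w}$), hence in $I$.

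For the reverse inclusion, I take a monomial $v\in P_{\tau,w}\cap I$ and write $v=ww_0c$ with $w_0\in S_\tau$ and $c\in S_\tau[x_{r+1},\ldots,x_n]$. For a fixed $j\in\tau$, some pure-power generator $x_i^{b_{ij}}$ of $Q_j$ divides $v$. The key step is that such an $i$ must exceed $r$: any variable of $\sqrt{Q_j}$ with index at most $r$ lies in $\sum_{j'\in\tau}\sqrt{Q_{j'}}$, hence is absent from $S_\tau$, so from $w_0$ and from the $S_\tau$-part of $c$; and if $x_i^{b_{ij}}$ divided $w$ we would get $w\in Q_j$, contradicting $w\in\mathcal{M}_\tau$. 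Writing $c=c_1c_2$ with $c_1\in S_\tau$ and $c_2\in\mathbb{K}[x_{r+1},\ldots,x_n]$, this forces $x_i^{b_{ij}}\mid c_2$ and hence $c_2\in Q_j$; as $j\in\tau$ was arbitrary, $wc_2\in\bigcap_{j\in\tau}Q_j\cap wS''$, and then $v=(wc_2)(w_0c_1)$ lies in the right-hand side. The main obstacle is precisely this variable-tracking step, which depends crucially on the pure-power form of the $Q_j$'s together with the defining conditions on $S_\tau$ and $\mathcal{M}_\tau$.
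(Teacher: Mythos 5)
Your proposal is correct and takes essentially the same approach as the paper: both restrict the decomposition of Proposition \ref{sdecomp} to $I$ monomial-by-monomial, use the hypothesis that $(x_1^{a_1}, \ldots, x_r^{a_r})$ occurs among the $Q_j$ to dispose of the summands $uS''$ with $u\notin Q$, identify the $\tau=\emptyset$ piece with $(I\cap S')S$ via Remark \ref{empty}, and show that for $\tau\neq\emptyset$ the $S''$-part of each monomial, multiplied by $w$, lies in $\bigcap_{j\in\tau}Q_j\cap wS''$. The only cosmetic difference is in that last step: where you track a pure-power generator $x_i^{b_{ij}}$ of $Q_j$ and show it must divide the $S''$-part, the paper writes $\alpha=wu'u_2$ with $u'\in S_{\tau}$, notes $u'\notin\sqrt{Q_j}$, and invokes the primary property of $Q_j$ to conclude $wu_2\in Q_j$ — for irreducible monomial ideals these are the same fact.
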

\begin{proof}
It is clear that every monomial of the sum
\begin{align*}
& \Big((I\cap S')S\Big)+\\ & \sum_{\tau\subset [s]}\sum_{w\in \mathcal{M}_{\tau}}\Bigg(\Bigg(\Big(\bigcap_{j\in [s]\setminus \tau}Q_j\cap wS_{\tau}\Big)S_{\tau}[x_{r+1}, \ldots, x_n]\Bigg) \cap \Bigg(\Big(\bigcap_{j\in \tau}Q_j\cap wS''\Big)S_{\tau}[x_{r+1}, \ldots, x_n]\Bigg)\Bigg),
\end{align*}
belongs to $I$. Thus, we prove that every monomial of $I$ belongs to the above sum. Assume that $\alpha\in I$ is a monomial. Then there exist monomials $u_1\in S'$ and $u_2\in S''$ such that $\alpha=u_1u_2$. Since $I\subseteq (x_1^{a_1}, \ldots, x_r^{a_r})$, we conclude that $u_1\in (x_1^{a_1}, \ldots, x_r^{a_r})\subseteq Q$ and hence$$\alpha\notin\bigoplus_{u\in {\rm Mon}(S'\setminus Q)}uS''.$$Therefore, Proposition \ref{sdecomp} shows that there exists a proper subset $\tau$ of $[s]$ and a monomial $w\in \mathcal{M}_{\tau}$ such that$$\alpha \in \Big(\bigcap_{j\in [s]\setminus \tau}Q_j\cap wS_{\tau}\Big)S_{\tau}[x_{r+1}, \ldots, x_n].$$If $\tau=\emptyset$, then Remark \ref{empty} implies that $\alpha \in (I\cap S')S$.

Thus, assume that $\tau \neq \emptyset$. It is sufficient to prove that $$\alpha \in (\Big(\bigcap_{j\in \tau}Q_j\cap wS''\Big)S_{\tau}[x_{r+1}, \ldots, x_n]\Bigg).$$Remind that $\alpha=u_1u_2$, where $u_1\in S'$ and $u_2\in S''$. It is clear that $u_1 \in wS_{\tau}$. Therefore, there exists a monomial $u'\in S_{\tau}$ such that $u_1=wu'$. Hence $\alpha =wu'u_2$. It follows from the definition of $S_{\tau}$ that for every $j\in \tau$, we have $u'\notin \sqrt{Q_j}$. Since for every $j\in \tau$, we have $\alpha \in I\subseteq Q_j$ and $Q_j$ is a primary ideal, we conclude that $wu_2\in \bigcap_{j\in \tau}Q_j$. This shows that $wu_2\in \bigcap_{j\in \tau}Q_j \cap wS''$. Hence$$\alpha =wu'u_2\in (\Big(\bigcap_{j\in \tau}Q_j\cap wS''\Big)S_{\tau}[x_{r+1}, \ldots, x_n]\Bigg),$$ and it implies that
\begin{align*}
& \Big((I\cap S')S\Big)+\\ & \sum_{\tau\subset [s]}\sum_{w\in \mathcal{M}_{\tau}}\Bigg(\Bigg(\Big(\bigcap_{j\in [s]\setminus \tau}Q_j\cap wS_{\tau}\Big)S_{\tau}[x_{r+1}, \ldots, x_n]\Bigg) \cap \Bigg(\Big(\bigcap_{j\in \tau}Q_j\cap wS''\Big)S_{\tau}[x_{r+1}, \ldots, x_n]\Bigg)\Bigg),
\end{align*}

It now follows from Proposition \ref{sdecomp} that the sum is in fact direct sum.
\end{proof}

The following corollary is an immediate consequence of Propositions \ref{sdecomp}, \ref{idecomp} and Remark \ref{empty}. It provides a decomposition for $S/I$ and helps us to determine a lower bound for the Stanley depth of $S/I$.

\begin{cor} \label{sidecomp}
Under the assumptions as in Proposition \ref{sdecomp}, suppose further that one of the irreducible monomial ideals in the decomposition \ref{dag} of $I$ is $(x_1^{a_1}, \ldots, x_r^{a_r})$, where $a_1, \ldots, a_r$ are positive integers. Then there is a decomposition of $S/I$:
\begin{align*}
& S/I=\Big(\bigoplus_{u\in {\rm Mon}(S'\setminus Q)}uS''\Big)\oplus\\ & \bigoplus_{\tau\subset [s]}\bigoplus_{w\in \mathcal{M}_{\tau}}\frac{\Bigg(\Big(\bigcap_{j\in [s]\setminus \tau}Q_j\cap wS_{\tau}\Big)S_{\tau}[x_{r+1}, \ldots, x_n]\Bigg)}{\Bigg(\Big(\bigcap_{j\in [s]\setminus \tau}Q_j\cap wS_{\tau}\Big)S_{\tau}[x_{r+1}, \ldots, x_n]\Bigg)\cap \Bigg(\Big(\bigcap_{j\in \tau}Q_j\cap wS''\Big)S_{\tau}[x_{r+1}, \ldots, x_n]\Bigg)},
\end{align*}
where $\tau$ runs over all nonempty proper subsets of $[s]$.
\end{cor}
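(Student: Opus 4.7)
The plan is to combine Propositions \ref{sdecomp} and \ref{idecomp} by a purely $\mathbb{K}$-linear quotient argument. To keep notation compact I would set
$$A_{\tau,w}:=\Bigl(\bigcap_{j\in [s]\setminus\tau} Q_j\cap wS_\tau\Bigr)S_\tau[x_{r+1},\ldots,x_n], \qquad B_{\tau,w}:=\Bigl(\bigcap_{j\in \tau} Q_j\cap wS''\Bigr)S_\tau[x_{r+1},\ldots,x_n],$$
so that the two propositions simply express $S$ and $I$ in terms of the $A_{\tau,w}$'s and $B_{\tau,w}$'s.

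First I would read Proposition \ref{sdecomp} as producing the $\mathbb{K}$-linear direct sum $S=U\oplus V_0\oplus V_1$, where $U:=\bigoplus_{u\in\mathrm{Mon}(S'\setminus Q)}uS''$, $V_0$ is the single summand corresponding to $\tau=\emptyset$, and $V_1:=\bigoplus_{\emptyset\neq\tau\subsetneq[s]}\bigoplus_{w\in\mathcal{M}_\tau}A_{\tau,w}$. By Remark \ref{empty} the summand $V_0$ coincides with $(I\cap S')S$.

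Next I would rewrite Proposition \ref{idecomp} as $I=V_0\oplus W_1$, where
$$W_1:=\bigoplus_{\emptyset\neq\tau\subsetneq[s]}\bigoplus_{w\in\mathcal{M}_\tau}\bigl(A_{\tau,w}\cap B_{\tau,w}\bigr).$$
The obvious inclusion $A_{\tau,w}\cap B_{\tau,w}\subseteq A_{\tau,w}$, taken summand by summand, shows that $W_1$ is a subspace of $V_1$ which respects its direct-sum decomposition. In particular $I=V_0\oplus W_1\subseteq V_0\oplus V_1$, so $U$ meets $I$ trivially (something one could also recover directly from the hypothesis that $(x_1^{a_1},\ldots,x_r^{a_r})$ is one of the $Q_i$'s, exactly as used inside the proof of Proposition \ref{idecomp}).

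Finally I would pass to the quotient: from $S=U\oplus V_0\oplus V_1$ and $I=V_0\oplus W_1$ I get
$$S/I\;=\;U\;\oplus\;V_1/W_1\;=\;U\;\oplus\;\bigoplus_{\emptyset\neq\tau\subsetneq[s]}\bigoplus_{w\in\mathcal{M}_\tau}A_{\tau,w}\big/\bigl(A_{\tau,w}\cap B_{\tau,w}\bigr),$$
where the second equality uses that $W_1$ is a direct sum of subspaces of the corresponding summands of $V_1$. This is exactly the decomposition asserted in the corollary. I do not expect any genuine obstacle here: the hard work was done in the two preceding propositions, and the only thing to verify in this corollary is the termwise inclusion $W_1\subseteq V_1$, which is tautological.
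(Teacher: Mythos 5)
Your proof is correct and is precisely the argument the paper intends: the paper states the corollary as an immediate consequence of Propositions \ref{sdecomp} and \ref{idecomp} together with Remark \ref{empty}, and your write-up simply makes explicit the standard $\mathbb{K}$-linear fact that if $S=U\oplus V_0\oplus V_1$ and $I=V_0\oplus W_1$ with $W_1$ contained summandwise in $V_1$, then $S/I$ decomposes termwise as claimed. No gaps, and no divergence from the paper's route.
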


The following lemma is a modification of \cite[Lemma 2.3]{t}. In fact, for $w=1$, it implies \cite[Lemma 2.3]{t}. Using this lemma, we are able to find a lower bound for the Stanley depth of summands appearing in Corollary \ref{sidecomp}.

\begin{lem} \label{ineq}
Let $S_1=\mathbb{K}[x_1, \ldots, x_n]$ and $S_2=\mathbb{K}[y_1, \ldots, y_m]$ be polynomial rings with disjoint set of variables and assume that $S_3=\mathbb{K}[x_1, \ldots, x_n, y_1, \ldots, y_m]$. Assume also that $S=\mathbb{K}[x_1, \ldots, x_n, y_1, \ldots, y_m, z_1, \ldots, z_t]$ is a polynomial ring containing $S_3$. Suppose that $I, J\subset S$ are monomial ideals and $w\in S\setminus J$ is a monomial. Set $I_1=I\cap wS_1$ and $J_1=J\cap wS_2$. Then $${\rm sdepth}_{S_3}\Big(\frac{I_1S_3}{I_1S_3\cap J_1S_3}\Big)\geq {\rm sdepth}_{S_1}\Big((I:w)\cap S_1\Big)+{\rm sdepth}_{S_2}\Big(\frac{S_2}{(J:w)\cap S_2}\Big).$$
\end{lem}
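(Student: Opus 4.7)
The plan is to strip off the monomial $w$, identify the resulting quotient as a tensor product $K_1 \otimes_{\mathbb{K}} (S_2/L_1)$ with $K_1 = (I:w) \cap S_1$ and $L_1 = (J:w) \cap S_2$, and then bound its Stanley depth from below by combining Stanley decompositions of the two factors.

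First, since $I$ is a monomial ideal, the monomial $\mathbb{K}$-basis of $I_1 = I \cap wS_1$ consists of products $wu$ with $u$ a monomial satisfying $wu \in I$, equivalently $u \in K_1$. Thus $I_1 = wK_1$, and similarly $J_1 = wL_1$ (with $L_1 \neq S_2$ because $w \notin J$). Extending to $S_3$-ideals and using unique factorization in $S$ then gives $I_1S_3 = w\cdot K_1S_3$, $J_1S_3 = w\cdot L_1S_3$, and $I_1S_3 \cap J_1S_3 = w\cdot(K_1S_3 \cap L_1S_3)$. The map $\eta \mapsto w\eta$ is an injective $\mathbb{Z}^{n+m}$-graded $S_3$-module homomorphism $S_3 \hookrightarrow S$ that sends these three modules respectively to $I_1S_3$, $J_1S_3$, and $I_1S_3 \cap J_1S_3$, so it induces an $S_3$-module isomorphism
$$K_1S_3/(K_1S_3 \cap L_1S_3) \cong I_1S_3/(I_1S_3 \cap J_1S_3)$$
of graded $S_3$-modules (up to grading shift), preserving Stanley depth.

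Next, because $K_1 \subset S_1$ and $L_1 \subset S_2$ involve disjoint sets of variables, a monomial $\alpha = uv$ (with $u \in S_1$ and $v \in S_2$) lies in $K_1S_3$ iff $u \in K_1$, and in $L_1S_3$ iff $v \in L_1$. Matching the monomial bases and the factorwise $S_3 = S_1 \otimes_{\mathbb{K}} S_2$-action then yields
$$K_1S_3/(K_1S_3 \cap L_1S_3) \cong K_1 \otimes_{\mathbb{K}} (S_2/L_1)$$
as graded $S_3$-modules.

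Finally, choose Stanley decompositions $K_1 = \bigoplus_i u_i\mathbb{K}[Z_i]$ and $S_2/L_1 = \bigoplus_j v_j\mathbb{K}[W_j]$ realizing the Stanley depths of these modules over $S_1$ and $S_2$, with $Z_i \subseteq \{x_1,\dots,x_n\}$ and $W_j \subseteq \{y_1,\dots,y_m\}$. The disjointness of the variable sets implies that the pairwise products $u_iv_j\mathbb{K}[Z_i \cup W_j]$ form a Stanley decomposition of $K_1 \otimes_{\mathbb{K}} (S_2/L_1)$ as an $S_3$-module, whose minimum Stanley-space dimension is $\min_i|Z_i| + \min_j|W_j| = {\rm sdepth}_{S_1}(K_1) + {\rm sdepth}_{S_2}(S_2/L_1)$, giving the inequality claimed in the lemma. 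The main obstacle is the bookkeeping in the first step: since $w$ may involve $z$-variables outside $S_3$, one must verify carefully that multiplication by $w$ is a genuine graded $S_3$-module isomorphism onto its image in $S$, not merely a $\mathbb{K}$-linear bijection; the remaining steps are then standard monomial and Stanley-decomposition manipulations.
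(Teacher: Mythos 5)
Your proof is correct and follows essentially the same route as the paper: both arguments strip off the monomial $w$ (the paper via the colon modules $(I_1S_3:w)$ and $(J_1S_3:w)$, you via the graded isomorphism given by multiplication by $w$, which is the same reduction) and then invoke the disjoint-variable inequality ${\rm sdepth}_{S_3}\big(K_1S_3/(K_1S_3\cap L_1S_3)\big)\geq {\rm sdepth}_{S_1}(K_1)+{\rm sdepth}_{S_2}(S_2/L_1)$. The only difference is that the paper concludes by citing \cite[Lemma 2.3]{t}, whereas you reprove that lemma directly via the identification with $K_1\otimes_{\mathbb{K}}(S_2/L_1)$ and the product of Stanley decompositions, making your argument self-contained but otherwise identical in substance.
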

\begin{proof}
We note that every monomial in $I_1S_3$ is divisible by $w$. Thus, the $S_3$-modules $I_1S_3/(I_1S_3\cap J_1S_3)$ and $(I_1S_3:w)/((I_1S_3:w)\cap (J_1S_3:w))$ are isomorphic. Hence, $${\rm sdepth}_{S_3}\Big(\frac{I_1S_3}{I_1S_3\cap J_1S_3}\Big)={\rm sdepth}_{S_3}\Big(\frac{(I_1S_3:w)}{(I_1S_3:w)\cap (J_1S_3:w)}\Big).$$Moreover, by the definition of $I_1$ and $J_1$ we have $(I_1S_3:w)=((I_1S_3:w)\cap S_1)S_3$ and $(J_1S_3:w)=((J_1S_3:w)\cap S_2)S_3$. Therefore, it follows from \cite[Lemma 2.3]{t} and the above inequality that $${\rm sdepth}_{S_3}\Big(\frac{I_1S_3}{I_1S_3\cap J_1S_3}\Big)\geq {\rm sdepth}_{S_1}\Big((I_1S_3:w)\cap S_1\Big)+{\rm sdepth}_{S_2}\Big(\frac{S_2}{(J_1S_3:w)\cap S_2}\Big).$$ Since $(I_1S_3:w)\cap S_1=(I:w)\cap S_1$ and $(J_1S_3:w)\cap S_2=(J:w)\cap S_2$, the assertion follows.
\end{proof}

In the following theorem, we determine a lower bound for the Stanley depth of $S/I$. It is a generalization of \cite[Theorem 2.4]{t}.

\begin{thm} \label{main}
Under the assumptions as in Corollary \ref{sidecomp}, there is an inequality
\begin{align*}
{\rm sdepth}_S(S/I)\geq {\rm min}\Bigg\{ & n-r, {\rm sdepth}_{S_{\tau}}\Big(\bigcap_{j\in [s]\setminus \tau}(Q_j:w)\cap S_{\tau}\Big)\\ &  + {\rm sdepth}_{S''}\Bigg(S''\Big/\Big(\bigcap_{j\in \tau}Q_j\cap S''\Big)\Bigg)\Bigg\},
\end{align*}
where the minimum is taking over all nonempty proper subset $\tau \subset [s]$ and all $w\in \mathcal{M}_{\tau}$ such that $(\cap_{j\in [s]\setminus \tau}Q_j\cap wS_{\tau})\neq 0$.
\end{thm}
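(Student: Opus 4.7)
The plan is to refine the $\mathbb{K}$-vector space decomposition of $S/I$ provided by Corollary~\ref{sidecomp} into a genuine Stanley decomposition, summand by summand, and then take the minimum of the resulting Stanley space dimensions. Since every Stanley space produced below will involve only variables from $\{x_1,\ldots,x_n\}$, such a space is automatically a Stanley space with respect to $S$ of the same dimension, so combining Stanley decompositions of the individual summands along the decomposition of Corollary~\ref{sidecomp} produces a valid Stanley decomposition of $S/I$ as an $S$-module.

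First I would observe that the ``free'' part $\bigoplus_{u \in {\rm Mon}(S' \setminus Q)} uS''$ is already a Stanley decomposition: each summand $uS''$ is the rank-one free $\mathbb{K}[x_{r+1},\ldots,x_n]$-module on the generator $u$, hence a Stanley space of dimension $n-r$. This contributes the term $n-r$ to the minimum. Next, for each nonempty proper subset $\tau \subset [s]$ and each $w \in \mathcal{M}_\tau$ with $\bigcap_{j\in [s]\setminus\tau} Q_j \cap w S_\tau \neq 0$, I would apply Lemma~\ref{ineq} to the corresponding quotient summand, taking $S_1 = S_\tau$ and $S_2 = S''$ (these have disjoint variable sets), $S_3 = S_\tau[x_{r+1},\ldots,x_n]$, $I = \bigcap_{j\in[s]\setminus\tau} Q_j$ and $J = \bigcap_{j\in\tau} Q_j$. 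The hypothesis $w \notin J$ of Lemma~\ref{ineq} follows from $w \in \mathcal{M}_\tau$, which for a monomial forces $w \notin Q_j$ for every $j \in \tau$ and hence $w \notin J$. The lemma then bounds the Stanley depth of this summand from below by ${\rm sdepth}_{S_\tau}((I:w) \cap S_\tau) + {\rm sdepth}_{S''}(S''/((J:w) \cap S''))$.

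The remaining step is to simplify the two colons into the forms stated in the theorem. The identity $(\bigcap_j Q_j : w) = \bigcap_j (Q_j : w)$ yields the first summand $\bigcap_{j\in[s]\setminus\tau}(Q_j : w) \cap S_\tau$ directly. For the second, since $w$ is a monomial in the variables of $S'$ while each irreducible ideal $Q_j$ is generated by pure powers of variables, a direct calculation shows $(Q_j : w) \cap S'' = Q_j \cap S''$ for every $j \in \tau$: generators of $Q_j$ in $S''$ are unaffected by the colon with $w$, while contributions from generators in $S'$ stay inside $S'$, and the condition $w \notin Q_j$ (guaranteed by $w \in \mathcal{M}_\tau$) prevents $(Q_j : w)$ from collapsing to the whole ring. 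Intersecting over $j \in \tau$ then gives $(J : w) \cap S'' = \bigcap_{j\in\tau} Q_j \cap S''$, which matches the second summand in the theorem.

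The main obstacle I anticipate is tracking the Stanley depth through the ring change: the summand is naturally an $S_\tau[x_{r+1},\ldots,x_n]$-module, while we need a lower bound for ${\rm sdepth}_S(S/I)$. This is handled by the observation in the opening paragraph, namely that any Stanley space whose variable set is drawn from $S_\tau[x_{r+1},\ldots,x_n]$ is equally a Stanley space of $S$ of the same dimension, so the bound supplied by Lemma~\ref{ineq} passes up to $S$ unchanged and assembles with the $n-r$ contribution from the free part into the claimed minimum.
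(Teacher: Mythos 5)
Your proposal is correct and follows essentially the same route as the paper's proof: both decompose $S/I$ via Corollary~\ref{sidecomp}, note that each summand $uS''$ is a Stanley space of dimension $n-r$, and bound each quotient summand by Lemma~\ref{ineq} with the identical assignments $S_1=S_\tau$, $S_2=S''$, $S_3=S_\tau[x_{r+1},\ldots,x_n]$, $I=\bigcap_{j\in[s]\setminus\tau}Q_j$, $J=\bigcap_{j\in\tau}Q_j$, using $w\in\mathcal{M}_\tau$ to get $w\notin Q_j$ (hence $w\notin J$) and $(Q_j:w)\cap S''=Q_j\cap S''$ for $j\in\tau$. Your explicit remark that Stanley spaces over the subring $S_\tau[x_{r+1},\ldots,x_n]$ remain Stanley spaces of the same dimension over $S$ makes precise a step the paper leaves implicit, but it is the same argument.
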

\begin{proof}
Note that for every nonempty proper subset $\tau \subset [s]$ and every $w\in \mathcal{M}_{\tau}$, we have $w\notin Q_j$, for all $j\in \tau$. Also, ${\rm Supp}(w)\cap S''=\emptyset$. This shows that for every $j\in \tau$, we have $(Q_j:w)\cap S''=Q_j\cap S''$. Now, the assertion follows from Corollary \ref{sidecomp} and Lemma  \ref{ineq}. To apply Lemma \ref{ineq}, for every summand appearing in Corollary \ref{sidecomp}, set $I=\cap_{j\in [s]\setminus \tau}Q_j$, $J=\cap_{j\in \tau}Q_j$, $S_1=S_{\tau}$, $S_2=S''$ and $S_3=S_{\tau}[x_{r+1}, \ldots, x_n]\subseteq S$.
\end{proof}

We are now ready to prove the main result of this paper.

\begin{thm} \label{size}
Let $I$ be a monomial ideal of $S$. Assume that
\[
\begin{array}{rl}
I=Q_1\cap \ldots \cap Q_s, \ s\geq 2
\end{array}
\]
is the unique irredundant presentation of $I$ as the intersection of irreducible monomial ideals. Suppose that for every $1\leq i\leq s$ and every proper nonempty subset $\tau\subset [s]$ with$$\sqrt{Q_i}\subseteq\sum_{j\in \tau}\sqrt{Q_j}$$we have$$Q_i\subseteq\sum_{j\in \tau}Q_j.$$
Then ${\rm sdepth}(S/I)\geq {\rm size}_S(I).$
\end{thm}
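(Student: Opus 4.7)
The plan is to prove Theorem \ref{size} by induction on $s$, using Theorem \ref{main} as the main engine. The base case $s = 1$ is immediate: $I = Q_1 = (x_{i_1}^{a_1},\ldots,x_{i_r}^{a_r})$ (after relabeling) gives ${\rm size}(I) = n - r$, and $S/I$ has a Stanley decomposition of depth exactly $n-r$ coming from the free polynomial subring in the remaining variables.

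For the inductive step ($s \geq 2$), the first task is to choose the distinguished component $Q_1 = (x_1^{a_1},\ldots,x_r^{a_r})$ so that $n - r \geq {\rm size}(I) = v + n - h - 1$, equivalently $r \leq h - v + 1$. I would fix a subfamily $\{Q_{j_1},\ldots,Q_{j_v}\}$ of minimum cardinality with $\sqrt{\sum_\ell Q_{j_\ell}} = \sqrt{\sum_j Q_j}$. Minimality forces each $Q_{j_\ell}$ to contribute at least one variable to the radical not contained in $\sqrt{\sum_{\ell' \neq \ell} Q_{j_{\ell'}}}$, and these ``private'' variables are mutually distinct, giving $v \leq h$. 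Subadditivity of heights for monomial primes then yields $\sum_\ell {\rm ht}(Q_{j_\ell}) \geq h$, whence some $Q_{j_\ell}$ has height at most $h/v \leq h - v + 1$. I would take this $Q_{j_\ell}$ as $Q_1$ (relabeling variables accordingly), ensuring that the first summand $n - r$ in the minimum of Theorem \ref{main} is already $\geq {\rm size}(I)$.

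For each remaining summand, indexed by a nonempty proper $\tau \subset [s]$ and $w \in \mathcal{M}_\tau$, I introduce $I_1 := \bigcap_{j \in [s]\setminus\tau}(Q_j : w) \cap S_\tau$ in $S_\tau$ and $I_2 := \bigcap_{j \in \tau} Q_j \cap S''$ in $S''$. Both have strictly fewer than $s$ irreducible components. The plan is to verify that each inherits the radical hypothesis of Theorem \ref{size}: for $I_2$ this is direct, as its components are a subset of $\{Q_j \cap S'' : j \in \tau\}$ and the restriction of the hypothesis to $\tau$ suffices; for $I_1$ it requires tracking how the colon-with-$w$ and restriction-to-$S_\tau$ affect radicals, and this is where the role of the hypothesis ($Q_i \subseteq \sum_{j \in \tau}Q_j$ whenever $\sqrt{Q_i} \subseteq \sum_{j \in \tau}\sqrt{Q_j}$) becomes essential. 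The inductive hypothesis then yields ${\rm sdepth}_{S''}(S''/I_2) \geq {\rm size}(I_2)$, while an analogous companion statement ${\rm sdepth}_{S_\tau}(I_1) \geq {\rm size}(I_1) + 1$ (proved by parallel induction in the same framework, extending \cite{hpv}) handles the ideal side. A combinatorial bookkeeping argument tracking how the height parameter $h$ and the size parameter $v$ of $I$ distribute among $I_1$ and $I_2$ would finally give ${\rm size}(I_1) + {\rm size}(I_2) + 1 \geq v + n - h - 1 = {\rm size}(I)$, completing the induction.

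The main obstacles I anticipate are threefold: first, verifying that the colon-and-restriction operation producing $I_1$ preserves the radical hypothesis of Theorem \ref{size}, so that the inductive hypothesis genuinely applies; second, the size arithmetic relating ${\rm size}(I_1) + {\rm size}(I_2) + 1$ to ${\rm size}(I)$, which must be robust against the many choices of $\tau$ and $w$; and third, the need for the ideal-side companion bound, since Theorem \ref{main} involves ${\rm sdepth}_{S_\tau}(I_1)$ rather than ${\rm sdepth}_{S_\tau}(S_\tau/I_1)$, and one cannot directly invoke the Theorem \ref{size} induction hypothesis to obtain it.
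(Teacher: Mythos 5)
Your skeleton (induction on $s$ driven by Theorem \ref{main}) matches the paper, but the plan has a genuine gap at its center: the ideal-side companion bound ${\rm sdepth}_{S_\tau}(I_1)\geq {\rm size}(I_1)+1$ that you propose to prove ``by parallel induction in the same framework'' does not exist in this framework and is not needed. Theorem \ref{main} bounds Stanley depths of quotients, not of ideals, so there is no parallel induction to run; moreover it is unclear that $I_1=\bigcap_{j\in [s]\setminus\tau}(Q_j:w)\cap S_{\tau}$ inherits the radical hypothesis at all, and your size bookkeeping ${\rm size}(I_1)+{\rm size}(I_2)+1\geq {\rm size}(I)$ is left entirely unverified. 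The paper sidesteps all three anticipated obstacles at once: since $\bigcap_{j\in[s]\setminus\tau}(Q_j:w)\cap S_{\tau}$ is a \emph{nonzero} monomial ideal of $S_{\tau}$, one gets ${\rm sdepth}_{S_{\tau}}\geq 1$ for free (\cite[Corollary 24]{h}), and then the inductive hypothesis applied to $I_2=\bigcap_{j\in\tau}Q_j\cap S''$ together with \cite[Lemma 3.2]{hpv}, which gives ${\rm size}_{S''}(I_2)\geq {\rm size}_S(I)-1$, yields $1+{\rm size}_{S''}(I_2)\geq {\rm size}_S(I)$. No control of ${\rm size}(I_1)$ is required.

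The second gap is that the step ${\rm sdepth}_{S_{\tau}}\geq 1$ needs $\dim S_{\tau}\geq 1$, and ruling out $S_{\tau}=\mathbb{K}$ is precisely where the theorem's distinctive hypothesis is used --- a case your proposal never considers and in fact mislocates: you place the hypothesis in ``tracking radicals'' for $I_1$, where the paper never uses it. If $S_{\tau}=\mathbb{K}$ then $\sqrt{Q_1}\subseteq\sum_{j\in\tau}\sqrt{Q_j}$, the hypothesis gives $Q_1\subseteq\sum_{j\in\tau}Q_j$, and then $w\in\bigcap_{j\in[s]\setminus\tau}Q_j\subseteq Q_1\subseteq\sum_{j\in\tau}Q_j$ contradicts $w\in\mathcal{M}_{\tau}$; this is the whole point of the assumption. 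You also omit the degenerate case $\bigcap_{j\in\tau}Q_j\cap S''=0$ (handled by the bound $n-r$, and forcing $1\notin\tau$ otherwise), and your claim that the hypothesis for $I_2$ ``is direct, the restriction to $\tau$ suffices'' is not quite right: a containment $\sqrt{Q_i\cap S''}\subseteq\sum_{j\in\tau'}\sqrt{Q_j\cap S''}$ in $S''$ only lifts to one in $S$ after adjoining $Q_1$, i.e.\ one must invoke the hypothesis for $\tau'\cup\{1\}$ and then intersect back with $S''$. Finally, your careful selection of a low-height component is harmless but unnecessary: ${\rm ht}(Q_i)\leq h-v+1$, hence ${\rm size}_S(I)\leq n-{\rm ht}(Q_i)$, holds for \emph{every} component $Q_i$, which is why the paper simply relabels $Q_1$.
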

\begin{proof}
We prove the assertion by induction on $s$. Without loss of generality assume that $Q_1=(x_1^{a_1}, \ldots, x_r^{a_r})$, for some integer $r$ with $1\leq r\leq n$. If $s=1$. Then $I=Q_1$ and it is clear that ${\rm size}_S(I)=n-r$. On the other hand, it follows from \cite[Theorem 1.1]{r} that ${\rm sdepth}(S/I)=n-r$. Thus, there is nothing to prove in this case. Hence assume that $s\geq 2$.

Set $S'=\mathbb{K}[x_1,\dots,x_r]$ and $S''=\mathbb{K}[x_{r+1},\dots,x_n]$. It is obvious from the definition of size that ${\rm size}_S(I)\leq n-r$. Therefore, using Theorem \ref{main}, it is enough to prove that for every nonempty proper subset $\tau \subset [s]$ and every $w\in \mathcal{M}_{\tau}$ with $(\cap_{j\in [s]\setminus \tau}Q_j\cap wS_{\tau})\neq 0$, we have$${\rm sdepth}_{S_{\tau}}\Big(\bigcap_{j\in [s]\setminus \tau}(Q_j:w)\cap S_{\tau}\Big) + {\rm sdepth}_{S''}\Bigg(S''\Big/\Big(\bigcap_{j\in \tau}Q_j\cap S''\Big)\Bigg)\geq {\rm size}_S(I).$$ Hence, we fix a nonempty proper subset $\tau \subset [s]$ and a monomial $w\in \mathcal{M}_{\tau}$ such that $(\cap_{j\in [s]\setminus \tau}Q_j\cap wS_{\tau})\neq 0$. If $\bigcap_{j\in \tau}Q_j\cap S''=0$, then
\begin{align*}
& {\rm sdepth}_{S_{\tau}}\Big(\bigcap_{j\in [s]\setminus \tau}(Q_j:w)\cap S_{\tau}\Big) + {\rm sdepth}_{S''}\Bigg(S''\Big/\Big(\bigcap_{j\in \tau}Q_j\cap S''\Big)\Bigg)\\ & \geq n-r\geq {\rm size}_S(I).
\end{align*}
Thus, assume that $\bigcap_{j\in \tau}Q_j\cap S''\neq 0$. In particular $1\notin \tau$. If $S_{\tau}=\mathbb{K}$, then it follows from the definition of $S_{\tau}$ that$$\sqrt{Q_1}\subseteq \sum_{j\in \tau}\sqrt{Q_j}.$$ Hence, by assumption $$Q_1\subseteq \sum_{j\in \tau}Q_j.$$Since $S_{\tau}=\mathbb{K}$, it follows from $(\cap_{j\in [s]\setminus \tau}Q_j\cap wS_{\tau})\neq 0$ and the above inclusion that$$w\in \cap_{j\in [s]\setminus \tau}Q_j\subseteq Q_1\subseteq \sum_{j\in \tau}Q_j,$$which is a contradiction by the definition of $\mathcal{M}_{\tau}$. Therefore, assume that $S_{\tau}\neq\mathbb{K}$. In other words $S_{\tau}$ is a polynomial ring of positive dimension.

Since $(\cap_{j\in [s]\setminus \tau}Q_j\cap wS_{\tau})\neq 0$, we conclude that $\bigcap_{j\in [s]\setminus \tau}(Q_j:w)\cap S_{\tau}$ is a nonzero ideal of $S_{\tau}$. It follows from \cite[Corollary 24]{h} that$${\rm sdepth}_{S_{\tau}}\Big(\bigcap_{j\in [s]\setminus \tau}(Q_j:w)\Big)\geq 1.$$Also, for every $i\in\tau$ and every proper subset $\tau'\subset \tau$, with$$\sqrt{Q_i\cap S''}\subseteq\sum_{j\in \tau'}\sqrt{Q_j\cap S''}$$we have$$\sqrt{Q_i}\subseteq\sum_{j\in \tau'\cup\{1\}}\sqrt{Q_j}$$and the assumption implies that$$Q_i\subseteq\sum_{i\in \tau'\cup\{1\}}Q_j.$$Thus,$$Q_i\cap S''\subseteq\sum_{i\in \tau'}Q_j\cap S''.$$
Thus, the induction hypothesis together with \cite[Lemma 3.2]{hpv} implies that
\begin{align*}
& {\rm sdepth}_{S_{\tau}}\Big(\bigcap_{j\in [s]\setminus \tau}(Q_j:w)\cap S_{\tau}\Big) + {\rm sdepth}_{S''}\Bigg(S''\Big/\Big(\bigcap_{j\in \tau}Q_j\cap S''\Big)\Bigg)\\ & \geq 1+{\rm size}_{S''}\Big(\bigcap_{j\in \tau}Q_j\cap S''\Big)\geq {\rm size}_S(I).
\end{align*}
\end{proof}

\begin{rem}
\begin{enumerate}
\item Every squarefree monomial ideal satisfies the assumption of Theorem \ref{size}. Because $Q_i=\sqrt{Q_i}$ for every $1\leq i\leq s$, if $I=Q_1\cap \ldots \cap Q_s$ is a squarefree monomial ideal. Thus, Theorem \ref{size} is an extension of Tang's result \cite[Theorem 3.2]{t}
\item Note that every monomial ideal satisfying the assumption of Theorem \ref{size} has no embedded associated prime. Indeed, assume that $\sqrt{Q_i}\subseteq \sqrt{Q_j}$ for $i\neq j$. Then the assumption of Theorem \ref{size} implies that $Q_i\subseteq Q_j$, which is contradiction. Because the intersection $Q_1\cap \ldots \cap Q_s$ is irredundant.
\end{enumerate}
\end{rem}




\begin{thebibliography}{10}

\bibitem {h} J. Herzog, A survey on Stanley depth. In "Monomial Ideals, Computations and Applications", A. Bigatti,
P. Gim${\rm\acute{e}}$nez, E. S${\rm\acute{a}}$enz-de-Cabez${\rm\acute{o}}$n (Eds.), Proceedings of MONICA 2011. Lecture Notes in
Math. {\bf 2083}, Springer (2013).

\bibitem {hh'} J. Herzog, T. Hibi, {\it Monomial Ideals}, Springer-Verlag,
    2011.

\bibitem {hpv} J. Herzog, D. Popescu, M. Vladoiu, Stanley depth and size of a monomial ideal, {\it Proc. Amer. Math. Soc.}, {\bf 140} (2012), 493--504.

\bibitem {ikm} B. Ichim, L. Katth{\"a}n, J. J. Moyano--Fern{\'a}ndez, The behavior of Stanley depth under polarization, preprint.

\bibitem {l} G. Lyubeznik, On the arithmetical rank of monomial ideals,
    {\it J. Algebra} {\bf 112} (1988), no. 1, 86--89.

\bibitem {psty} M. R. Pournaki, S. A. Seyed Fakhari, M. Tousi, S. Yassemi,
    What is $\ldots$ Stanley depth? {\it Notices Amer. Math. Soc.} {\bf 56}
    (2009), no. 9, 1106--1108.

\bibitem {r} A. Rauf, Stanley decompositions, pretty clean filtrations and
    reductions modulo regular elements, {\it Bull. Math. Soc. Sci. Math.
    Roumanie (N.S.)} {\bf 50(98)} (2007), no. 4, 347--354.

\bibitem {s} R. P. Stanley, Linear Diophantine equations and local
    cohomology, {\it Invent. Math.} {\bf 68} (1982), no. 2, 175--193.

\bibitem {t} Z. Tang, Stanley depths of certain Stanley--Reisner rings, {\it J. Algebra.}
{\bf 409} (2014), 430--443.

\end{thebibliography}
\end{document}